%

\documentclass{article}
\usepackage{amssymb} 
 \usepackage{amsmath}
\usepackage{amsthm}
%
\makeatletter
\def\@seccntformat#1{\csname named#1\endcsname\csname the#1\endcsname.\ }
\makeatother

\usepackage{epsfig}
\usepackage{graphicx}
\usepackage{subfig} 


\theoremstyle{theorem}
\newtheorem{theorem}{Theorem}

\theoremstyle{definition}

    \def\ve#1{{\bf #1}}
    \def\norm#1{\| {\bf #1} \|}
   \def\normm#1{\| { #1} \|}
   \def\abs#1{\vert #1 \vert}
    \def\r{{\Bbb R}}

\begin{document}

\title{Length-Preserving Directions and Some Diophantine Equations}
\markright{Length-preserving directions}
\author{Juan Tolosa}

\maketitle

\begin{abstract}
We study directions along which the norms of vectors are preserved 
under a linear map. 
In particular, we find families of matrices for which these directions 
are determined by integer vectors. 
We consider the two-dimensional case in detail, 
and also discuss the extension to three-dimensional vector spaces.
\end{abstract}

\bigskip

\section{ Introduction.}

In the nice Webinar talk ``Eigenpairs in Maple'' 
of June 25, 2015 \cite{lopez}, 
Dr.~Robert Lopez discussed how to use {Maple} to 
find eigenvalues and eigenvectors (eigenpairs) of a
matrix $A$.  
An eigenvector of $A$ is a (nonzero) vector whose 
direction is preserved under multiplication by $A$.
By the end of the talk, Dr.~Lopez, as an aside, asked the question, 
what about preserving the {\it magnitude\/} of the vector, rather than its direction? 
In other words, what about (nonzero) vectors $\ve v$ such that
$\norm{v} = \normm{A \ve v}$, 
where $\norm{v}$ is the usual Euclidean norm? 
He provided the example of 

$$ A = \begin{pmatrix} 4&3\cr-2&-3  \end{pmatrix} ; $$
regarded as a map from $\r^2$ to itself;  
it preserves the norms, but not the directions,  of 
the vectors with integer coordinates 
$\ve v_1 = \langle 1, -1 \rangle$ and 
$\ve v_2 = \langle 17, -19 \rangle$. 
He clarified he had found such matrix by using {Maple} 
and ``for-loops'' to find 
matrices $A$ for which the equation
$\norm{ v} = \normm{A \ve v}$ would have integer solutions. 

In this article we explore this intriguing idea, 
obtain several families of such ``nice'' $2\times 2$ matrices, 
then
consider a few $3 \times 3$ examples, 
and discuss a couple of related quadratic Diophantine equations.

To avoid repetition, when considering an equation involving a vector $\ve v$, by a 
{\it solution\/} $\ve v$ 
we will consistently mean a {\it nontrivial solution\/} $\ve v \neq \ve 0$.

\section{General considerations.} 
A given $n \times n$-matrix $A$ with real entries generates  
a related linear map $\ve v \mapsto A \ve v$ of $\r^n$ into itself; 
we will use the same letter $A$ to represent this map.

We seek nonzero vectors $\ve v$ whose norm is preserved under 
this linear map; in other words, we seek (nonzero) solutions of the equation 
\begin{equation}
\norm{ v} = \normm{A \ve v}. 
\label{norma} 
\end{equation}

First of all, observe that, since $\norm{\lambda \ve v} = \abs{\lambda}\cdot\norm{\ve v}$, 
then the entire line generated by any nonzero solution of (\ref{norma}) 
will consist of vectors whose
norm is preserved by $A$; we will call these lines the {\it norm-preserving lines}.

Next, if $A$ has eigenvalue $1$, or $-1$, then the corresponding eigenspace
will consist entirely of solutions of (\ref{norma}) as well. 

The interesting case, though, is when there are nonzero solutions of (\ref{norma}) 
that are {\it not\/} eigenvectors. This may happen {\it even\/} if $A$ has an eigenvalue 
$\pm 1$. For example, the matrix
$$ 
A = 
\begin{pmatrix}  1&-8 \cr 0 & 3 \end{pmatrix} 
$$
has eigenvalue $1$ with eigenline determined by $\ve v = \langle 1, 0 \rangle$, 
but also has another, noninvariant, norm-preserving line,  determined by 
$\ve w = \langle 9, 2 \rangle$. 
Along this line, the map $A$ acts like a rotation.

At the other end of the spectrum we have the case of an orthogonal matrix $A$, for 
which {\it every\/} line through the origin is norm-preserving. In the $2\times 2$ case 
these are typically rotations, for which these lines are noninvariant; however, all lines 
are rotated by the same angle under $A$. In the general case this 
does not happen: each norm-preserving line is usually rotated by a different angle.

\section{The $2 \times 2$ case.} 

Let us discuss 
nonzero solutions of equation (\ref{norma}) 
for the case of a $2 \times 2$ real-valued matrix $A$. 
We will first obtain conditions for existence of such solutions, 
and next, find families of norm-preserving lines determined by integer vectors.

\subsection{Existence of a solution. }

Let us study solutions of equation (\ref{norma}) for a 
general real-valued $2 \times 2$-matrix
$$ A = \begin{pmatrix} a&b \\ c&d \end{pmatrix} . $$
Equation (\ref{norma}) is equivalent to 
$\norm{ v}^2 = \normm{A \ve v}^2$, or 
$(\ve v, \ve v) = (A \ve v, A \ve v)$, where
$ (\ve v, \ve w)$ is the usual Euclidean inner product. 
The right-hand side becomes
$$\normm{A \ve v}^2  = (A \ve v, A \ve v) 
     = (\ve v, A^t A \ve v) = (\ve v, B \ve v), $$
where $A^t$ is the transpose of $A$, and $B = A^t A$. 
Thus, equation (\ref{norma}) is equivalent to
\begin{equation} 
(\ve v, (B - I) \ve v) = 0 , 
\label{norma2} 
\end{equation}
Where $I$ is the identity matrix. 

Further, we have
\[
B = A^t \, A = \begin{pmatrix} a^2 + c^2 & ab+cd \\ ab+cd & b^2 +d^2 \end{pmatrix}
= \begin{pmatrix} m & p \\ p & n \end{pmatrix}, 
\]
where
\begin{eqnarray}
m & = & a^2 + c^2 ,  \label{i}\\
n & = & b^2 + d^2 , \label{ii} \\
p & = & ab + cd , \label{iii}
\end{eqnarray}
so that if we denote $\ve v = \langle x, y \rangle$, then the quadratic form at the left-hand side of 
(\ref{norma2}) is
\begin{equation}
\Phi(x, y) = (m-1)x^2 + 2p xy + (n-1) y^2 ; 
\label{forma}
\end{equation}
with this notation, (\ref{norma}) or, equivalently, (\ref{norma2}), is in turn equivalent to
$\Phi(x, y) = 0$.
We now prove the following result.

\begin{theorem}
Norm-preserving lines exist if and only if  
\begin{equation} a^2 + b^2 + c^2 + d^2 \ge 1 + \det(A)^2 . 
\label{condition}
\end{equation}
\end{theorem}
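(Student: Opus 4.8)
The plan is to translate the existence of nonzero solutions of $\Phi(x,y)=0$ into a single sign condition on the coefficients of the quadratic form, and then to recognize the resulting expression as (\ref{condition}). The governing principle is the classical fact about binary quadratic forms: a real form $\alpha x^2 + 2\beta xy + \gamma y^2$ vanishes at some nonzero $(x,y)$ if and only if it is \emph{not} definite, i.e.\ if and only if its discriminant satisfies $\beta^2 - \alpha\gamma \ge 0$. Indeed, when $\beta^2 - \alpha\gamma < 0$ the associated symmetric matrix $\left(\begin{smallmatrix} \alpha & \beta \\ \beta & \gamma \end{smallmatrix}\right)$ has determinant of a definite sign and the form is sign-definite, so it vanishes only at the origin; when $\beta^2 - \alpha\gamma > 0$ the form factors into two distinct real linear factors and vanishes along two lines; and the boundary case $\beta^2 - \alpha\gamma = 0$ is the point to handle with care.

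First I would set $\alpha = m-1$, $\beta = p$, $\gamma = n-1$, so that by the criterion above norm-preserving lines exist precisely when
\[
p^2 - (m-1)(n-1) \ge 0 .
\]
Expanding the product gives $p^2 - mn + (m+n) - 1 \ge 0$, i.e.\ $m + n \ge 1 + (mn - p^2)$.

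The second step is to identify the two quantities appearing here with the data of $A$. Using (\ref{i})--(\ref{iii}), one computes directly that $m + n = a^2 + b^2 + c^2 + d^2$, while Lagrange's identity (equivalently, $mn - p^2 = \det(A^t A) = \det(B)$) gives $mn - p^2 = (ad - bc)^2 = \det(A)^2$. Substituting these two identities turns $m + n \ge 1 + (mn - p^2)$ into exactly (\ref{condition}), which completes the equivalence.

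The step requiring the most care is the boundary case $p^2 = (m-1)(n-1)$, corresponding to equality in (\ref{condition}); this is the reason the inequality is non-strict. Here $\Phi$ is semidefinite. If it is not identically zero and, say, $m-1 \neq 0$, then $\Phi = \frac{1}{m-1}\bigl((m-1)x + p\,y\bigr)^2$ is a perfect square (up to sign) of a single linear form, whose kernel is one genuine norm-preserving line; the subcase $m-1=0$ is handled similarly. If instead all coefficients $m-1$, $p$, $n-1$ vanish, then $\Phi \equiv 0$ and every line is norm-preserving (this is the orthogonal case, $B = I$). In each subcase a nonzero solution exists, so equality in (\ref{condition}) genuinely belongs to the existence regime, securing the one direction that could otherwise fail.
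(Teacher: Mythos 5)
Your proof is correct and follows essentially the same route as the paper's first (analytic) proof: both reduce existence to the nonnegativity of the discriminant $p^2 - (m-1)(n-1)$ of the form $\Phi$ and then use the identity $mn - p^2 = \det(B) = \det(A)^2$ to rewrite that inequality as (\ref{condition}). The only difference is organizational: you invoke the classical definiteness criterion for binary quadratic forms and treat the semidefinite boundary case explicitly, whereas the paper reaches the same discriminant condition by splitting into the cases $n=1$ and $n\neq 1$.
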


We will provide two proofs of this fact, one analytic, and one geometric.

\begin{proof}[First Proof]
Norm-preserving lines are determined by 
$\ve v = \langle x, y \rangle$, where $(x, y)$ is a nontrivial solution of  $\Phi(x, y) = 0$, 
where $\Phi(x, y)$ is given by (\ref{forma}).
If $n=1$, 
that is, if 
$b^2 + d^2 = 1$, this equation becomes 
$$ (m-1) x^2 + 2pxy = x \left[ (m-1) x + 2p y \right] = 0, $$
which has the nontrivial solution $\ve v = \langle 0, 1\rangle$. 
Also, in this case condition (\ref{condition}) is satisfied, since
\begin{align*}
&a^2 + b^2 + c^2 + d^2 - 1 - \det(A)^2  \cr
&= a^2 + c^2 - \det(A)^2 = (a^2+c^2) - (ad - bc)^2 ,
\end{align*}
which is equal to $(ab + cd)^2 = p^2$ under the assumption that 
$n = b^2 + d^2 = 1$. This follows from the identity
$$ (a^2+c^2) - (ad - bc)^2 - (ab + cd)^2 = [1- (b^2 + d^2)](a^2 + c^2) . $$
Thus, in this case condition (\ref{condition}) holds.

If $n \neq 1$, a nontrivial solution of 
$$ (m-1) x^2 + 2pxy + (n-1) y^2 = 0 $$
must satisfy $x \neq 0$. Dividing by $x^2$ and solving for $\frac yx$, we obtain
\begin{equation}
{y \over x} =  {- p \pm \sqrt{p^2 - (m-1)(n-1)} \over n-1} . 
\label{eqForRatio}
\end{equation}
A solution will exist if and only if the discriminant
$ p^2 - (m-1)(n-1) $
is non-negative.
Further,
$$ p^2 - (m-1)(n-1) = (p^2 - mn) + m + n - 1, $$
and 
$$ mn - p^2 = \det(B) = \det(A)^2 = (ad - bc)^2 , $$
which can also be checked directly using (\ref{i})--(\ref{iii}). 
Thus, there is a norm-preserving line if  and only if  
$$ m + n - 1 - \det(A)^2 =
(a^2+c^2)+(b^2+d^2) - 1 -\det(A)^2 \ge 0 , $$
which coincides with condition (\ref{condition}).
\end{proof}

\begin{proof}
[Second Proof]
The eigenvalues $\lambda_1$ and $\lambda_2$ of 
the symmetric matrix $B = A^t A$ are (real and) nonnegative;
assume $0 \le \lambda_1 \le \lambda_2$. 
By the extreme properties of eigenvalues (see, for example, \cite{gelfand}
or \cite{shilov}), we have 
\begin{eqnarray*}
\lambda_1  &=& \min_{\norm{v} = 1} \normm{A \ve v}^2 
= \min_{\norm{v} = 1} (\ve v, B \ve v)    \cr
  &\le& \max_{\norm{v} = 1} \normm{A \ve v}^2 
= \max_{\norm{v} = 1} (\ve v, B \ve v) = \lambda_2 .  
\end{eqnarray*}

Therefore, there will exist norm-preserving lines $\ve v$ 
such that $\norm{v} = \normm{A \ve v}$ if  and only if  
$$ \lambda_1 \le 1 \le \lambda_2. $$
When the eigenvalues are strictly positive, this condition guarantees
the intersection of the ellipse 
$(\ve v, B \ve v) = 1$, 
for which the half-axes are 
$$ {1 \over \sqrt{\lambda_1}} \qquad \hbox{ and } \qquad  {1 \over \sqrt{\lambda_2}}, $$
with the unit circle $ \norm{v} = 1$ (see Figure \ref{firstEllipse}).

\begin{figure}[htbp]
\begin{center}
\includegraphics[height=50mm]{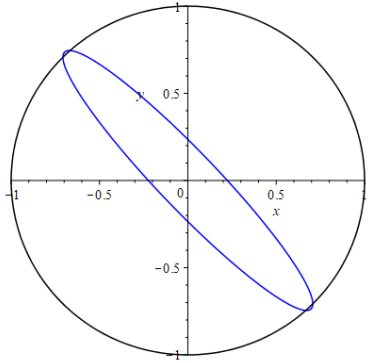}
\caption{Illustration of R. Lopez's example.}
\label{firstEllipse}
\end{center}
\end{figure}

The eigenvalues of $B$ are found from the characteristic equation
\begin{equation}
\lambda^2 - t \lambda + \Delta = 0, 
\label{chareq}
\end{equation}
where
$$ t = {\rm trace}\, {B} \qquad\hbox{ and }\qquad \Delta = \det B . $$
In terms of $B$, condition (\ref{condition}) reads as
$$ t \ge 1 + \Delta . $$
Notice that $\Delta = \det(B) \ge 0$, so actually (\ref{condition}) 
implies $t \ge 1 + \Delta \ge 1 > 0$. 
Solving (\ref{chareq}) we find
$$ \lambda = {t \over 2} \pm {1\over2} \sqrt{t^2 - 4 \Delta} . $$
Let us assume now that  $t \ge 1 + \Delta$. Then
$$ t^2 - 4 \Delta \ge (1 +\Delta)^2 - 4 \Delta = (1 - \Delta)^2 . $$
so that $\sqrt{t^2 - 4 \Delta} \ge \abs{1 - \Delta}$. 
For the largest eigenvalue $\lambda_1$ we have
$$ \lambda_1 \ge {1 \over 2}(1 + \Delta) + {1 \over 2}\abs{1- \Delta} . $$
Considering both cases $1 \ge \Delta$ and $\Delta \ge 1$, we conclude 
that 
\begin{equation}
\lambda_1 \ge \max\{ \Delta, 1\} . 
\label{boundOne}
\end{equation}
In particular, we have $\lambda_1\ge 1$, as desired.
Now let $\lambda_2$ be the smallest eigenvalue. 
Since $\lambda_1 \lambda_2 = \Delta$, we have
$$ \lambda_2 = {\Delta \over \lambda_1} , $$
and condition (\ref{boundOne}) now implies that indeed
$\lambda_2 \le 1$. 
Thus, condition (\ref{condition}) guarantees the existence of norm-preserving lines. 
The converse is straightforward. 
\end{proof}

\subsection{Families of matrices with integer solutions.}

We want to find matrices
$$  A = \begin{pmatrix} a&b \\ c&d \end{pmatrix}  $$
with integer or rational entries, for which the norm-preserving lines are determined by
vectors $\ve v$ with integer coordinates; we will call these  
{\it integer solution lines.}%
      \footnote{Of course, if, say, a norm-preserving line is determined 
      by $\ve v = \langle 1, 2 \rangle$, then it is also determined by
      $\ve v = \langle \sqrt{2}, 2 \sqrt{2} \rangle$. The idea is that {\it there exists\/} a determining
      vector with integer coordinates. In the $2 \times 2$  case, 
      this means we are interested in solution lines with rational slopes.}
Recall that 
$$\normm{A \ve v}^2 =  (A\ve v, A \ve v) = (\ve v, B \ve v), $$
where
\[
B = A^t \, A = \begin{pmatrix} a^2 + c^2 & ab+cd \\ ab+cd & b^2 +d^2 \end{pmatrix}
= \begin{pmatrix} m & p \\ p & n \end{pmatrix}, 
\]
so that the solutions of $\normm{A \ve v}^2 = \norm{\ve v}^2$
are given by $\ve v = \langle x, y \rangle$, where $(x, y)$ is a (nontrivial) solution of 
\begin{equation}
 (m-1) x^2 + 2p xy + (n-1) y^2 = 0 . 
 \label{diophantine1}
 \end{equation}
If either $m=1$ or $n=1$, it is not hard to get families of solutions.
For example, for the two-parameter family 
$$ A = \begin{pmatrix} \frac3{\strut 5} & b \cr \frac{\strut 4}5 & d \end{pmatrix} , $$
we get norm-preserving lines determined by 
$\ve v_1 = \langle 1, 0 \rangle$ and 
$$ \ve v_2 = \langle 5(1 - b^2 - d^2), 2(3b+4d) \rangle . $$

\medskip

For the general case, let us assume that the entries of $A$ are integers, and 
seek integer solutions of the Diophantine equation (\ref{diophantine1}). 
Assuming $n \neq 1$ and solving for $y/x$, as we did in 
(\ref{eqForRatio}), we conclude that there are integer solution lines if  and only if 
the discriminant $p^2 - (m-1)(n-1)$ is a perfect square, say, $k^2$. 
This leads to the new Diophantine equation 
$$ p^2 - (m-1)(n-1) = k^2 , $$
which can be rewritten as 
 \begin{equation}
 (m-1)(n-1) = p^2 - k^2 = (p+k)(p-k) ,
 \label{diophantine2}
 \end{equation}
where $m, n, p$ are given by (\ref{i})--(\ref{iii}). 

We will now find two-parameter families of solutions.
To this end, let us set 
\begin{eqnarray*}
m-1 &=& p+k, \\
n-1 &=& p-k,
\end{eqnarray*}
or 
\begin{eqnarray}
(a^2+c^2)-1 &=& p+k,  \label{eq1} \\
(b^2+d^2)-1 &=& p-k,   \label{eq2}
\end{eqnarray}
to which we will 
include the equation 
\begin{equation}
2(ab + cd) = 2p , \label{eq3}
\end{equation}
stemming from the definition (\ref{iii}) of $p$.
Adding (\ref{eq1}) and (\ref{eq2}), and subtracting (\ref{eq3}), we get 
\begin{equation}
(a-b)^2 + (c-d)^2 = 2,  
\label{eq4}
\end{equation} 
from which we conclude that
\begin{equation}
\abs{a-b} = 1 \qquad \hbox { and } \qquad \abs{c-d} = 1. 
\label{absoluto}
\end{equation}
Next,
subtracting (\ref{eq2}) from (\ref{eq1}), we obtain
\begin{equation*}
a^2 - b^2 + c^2 - d^2  = 2k,
\end{equation*}
or
\begin{equation}
(a-b)(a+b) + (c-d)(c+d) = 2k
\label{eq5}
\end{equation}
Considering all the possibilities in (\ref{absoluto}), 
we obtain the following four two-parameter families of matrices
$$ \begin{pmatrix} a & a \pm 1 \cr c & c \pm 1 \end{pmatrix} . $$
For each of them, we can use (\ref{eq5}) to find the value of $k$.
Incidentally,  their transposes, 
$$ \begin{pmatrix} a & c \cr a \pm 1 & c \pm 1 \end{pmatrix} , $$
also have integer solution lines. 

The example of Robert Lopez corresponds to the case 
$a-b = 1$ and $c - d = 1$, which leads to 
\begin{equation}
A = \begin{pmatrix} a & a - 1 \cr c & c - 1 \end{pmatrix} ; 
\label{lopezGen}
\end{equation}
he chose the particular values 
$a = 4$ and $c = -2$. 
For the general solution of type (\ref{lopezGen}), 
we find from (\ref{eq5}) that $k = a + c - 1$. 
Substituting this value into (\ref{eqForRatio}), which now looks like
$$ 
{y \over x} =  {- p \pm k \over n-1} ,  $$
we conclude that
the norm-preserving lines are determined by the vectors 
$$ \ve v_1 = \langle 1, -1 \rangle $$
and 
$$ \ve v_2 = \langle (a-1)^2+(c-1)^2-1, 1- a^2 - c^2 \rangle . $$
The remaining cases can be discussed similarly.

\begin{figure}[htbp]
\begin{center}
\includegraphics[height=50mm]{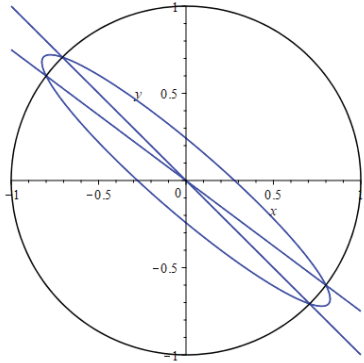}
\caption{Norm-preserving directions shown.}
\label{secondEllipse}
\end{center}
\end{figure}

Figure \ref{secondEllipse} illustrates the case $a=2$, $c=-3$;
the two length-preserving lines are shown. 
The direction vectors are $\ve v_1 = \langle 1, -1 \rangle$ and $\ve v_2 =  \langle 4, -3 \rangle$. 
As expected, 
the solution lines pass through the intersections of the ellipse 
$\normm{A \ve v} = 1$ with the unit circle. 

The corresponding picture for the matrix chosen by Robert Lopez was 
shown in Figure \ref{firstEllipse}; the solutions lines are not depicted, 
since they are rather close to each other.

\smallskip

If the ellipse is tangent to the unit circle, as for example when $a = 3$, $c = -2$,
we get only one integer solution line.

\section{The $3 \times 3$ case.} 

The $3 \times 3$ case is considerably more complicated, as well as more interesting.

If $A$ is a $3 \times 3$ real-valued matrix, also regarded as a linear map from $\r^3$ to
itself, then in general 
$ \normm{A \ve v}^2 = 1$ is an ellipsoid, in terms of the coordinates
of $\ve v = \langle x, y, z \rangle$. 
As in the $2\times2$ case, we have
$$  \normm{A \ve v}^2 = (A \ve v, A \ve v) = (\ve v, B \ve v ) , $$
where 
$B = A^t A$ is a symmetric matrix with nonnegative eigenvalues. 
The equation for norm-preserving vectors, $\normm{A \ve v} = \norm{v}$, 
or $(\ve v, B \ve v) =( \ve v, \ve v)$, 
is equivalent to the cone
\begin{equation}
(\ve v, (B-I)\ve v) = 0 , 
\label{thecone}
\end{equation}
where $I$ is the identity matrix. 

As in the $2 \times 2$ case, if we denote the eigenvalues of $B$ by
$0 \le \lambda_1 \le \lambda_2 \le \lambda_3$, 
then there is a solution of (\ref{thecone}) if  and only if 
\begin{equation}
\label{conditionFor3by3}
\lambda_1 \le 1 \le \lambda_3 , 
\end{equation}
which guarantees a nonempty intersection of the ellipsoid (or degenerate ellipsoid) 
$\normm{B \ve v}^2 = 1$ 
with the unit sphere $\norm{v}^2 = 1$.

If condition 
(\ref{conditionFor3by3}) 
is satisfied, then the cone determined by (\ref{thecone}) will pass through this intersection of the ellipsoid and the unit sphere.
As an illustration, for the matrix in Example 1,  
Figure \ref{ExampleOne} (a) shows the ellipsoid 
$\normm{A \ve v}^2 = 1$ and the 
unit sphere, and Figure \ref{ExampleOne} (b) has 
the added solution cone $\normm{A \ve v} = \norm v$;  
compare with Figure \ref{secondEllipse} for the $2 \times 2$ case.

\begin{figure}[ht]
\centering
\subfloat[]{{\includegraphics[height=53mm]{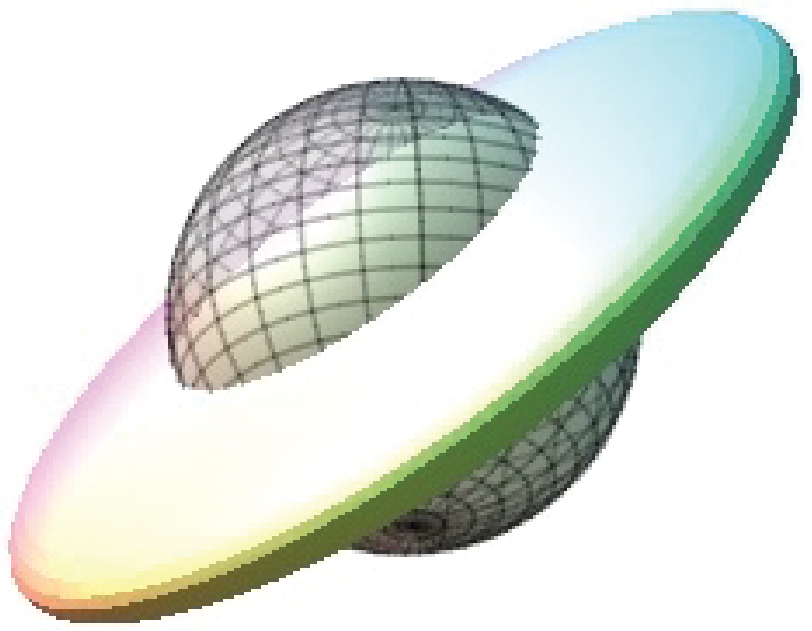}}}
\quad
\subfloat[]{{\includegraphics[height=53mm]{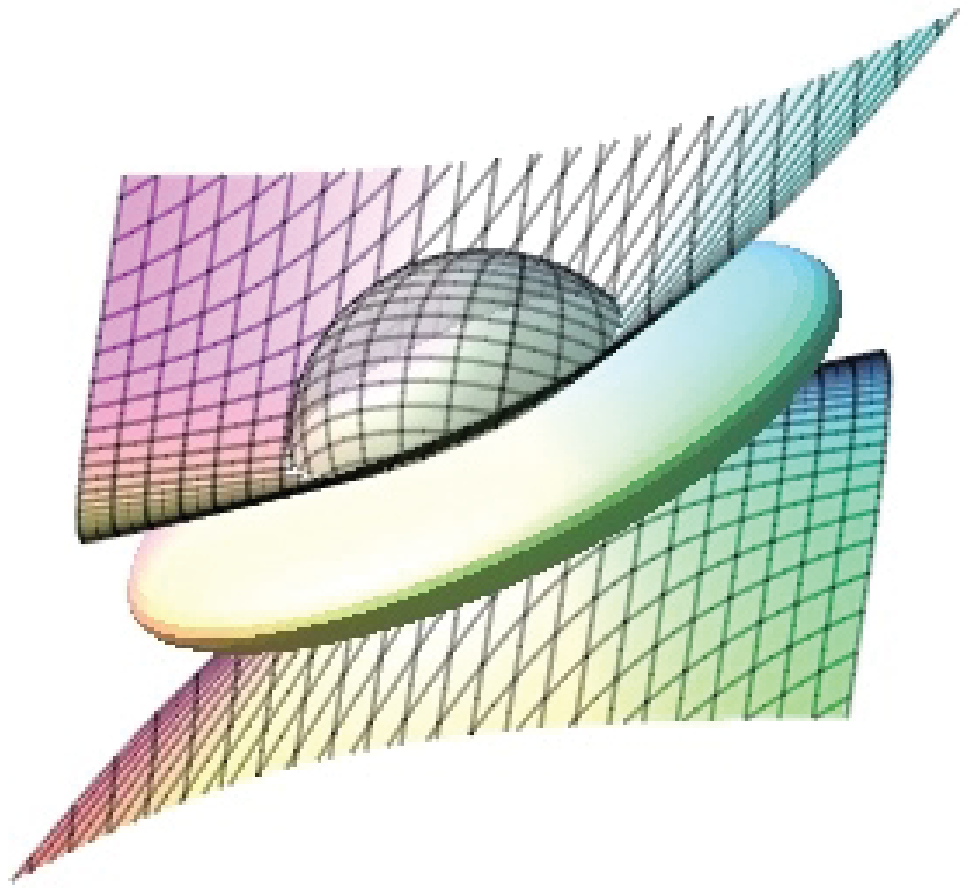}}}
\caption{Illustration for Example 1: (a) ellipsoid and sphere; 
(b) same, plus solution cone.
\label{ExampleOne}}
\end{figure}

For the $3 \times 3$ case, however, it is considerably harder to find an expression 
for condition (\ref{conditionFor3by3}) directly in terms of the entries of $A$.
Instead, 
we will limit ourselves to discuss several examples that exhibit the 
various possible 
outcomes regarding the existence of integer solution lines.

Before giving examples, let us add another comment: 
unlike the $2 \times 2$ case, 
in the $3 \times 3$ case one cannot hope that in general {\it all\/} the lines in the cone (\ref{thecone}) 
for a given matrix $A$ with integer or rational coefficients  
will turn out to be integer solution lines. 
Examples 2 and 3 show that, however, we can still get infinitely many such lines. 

\subsection{Example 1: no integer solution lines.} 

Let us consider the symmetric matrix
\begin{equation}
      A = \begin{pmatrix}  1&1&\frac{\strut 1}{\strut 2}\cr
                                      1&\frac{\strut 1}{\strut 2}&1\cr
                                      \frac{\strut 1}{\strut 2}&1&1 \end{pmatrix}
             \label{firstA}
\end{equation}

The form
$ \normm{A \ve v}^2 =  (A \ve v, A \ve v) = (\ve v, B \ve v) $
will in this case have matrix
$$ B = A^t A = A^2 = 
\begin{pmatrix}  \frac{\strut 9}{\strut 4}&2&2\cr
                                      2& \frac{\strut 9}{\strut 4}&2\cr
                                      2&2& \frac{\strut 9}{\strut 4} \end{pmatrix}
$$
so that equation (\ref{thecone}) will be
\begin{equation}
\frac 54 x^2 + 4 x y + 4 x z + \frac54  y^2 + 4 y z + \frac54 z^2 = 0 . 
\label{thisCone}
\end{equation}
This equation has infinitely many real-valued solutions, which 
constitute the solution cone shown in Figure \ref{ExampleOne} (b).
We want to show, however, that there are no integer solution lines, that is,
no nontrivial 
vectors $\ve v$ with integer coordinates such that $\normm{A \ve v} = \norm{v}$.

Indeed, (\ref{thisCone}) 
is a quadratic equation in $z$, which we can solve:
$$ z = - {8 \over 5}(x+y) \pm {\sqrt{39 x^2 + 48 xy + 39 y^2} \over 5} .
$$
Therefore, there will be integer solution lines if  and only if  
the discriminant $39 x^2 + 48 xy + 39 y^2$ is a perfect square.
We now prove this  does not happen. 

\begin{theorem}
\label{theo2}
The Diophantine equation
\begin{equation} 39 x^2 + 48 x y  + 39 y^2 = u^2 \label{uno} 
\end{equation}
has no nontrivial solutions, that is, 
no nonzero integer solutions.
\end{theorem}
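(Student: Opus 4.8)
The plan is to prove the stronger arithmetic statement directly, by a single coprimality reduction followed by a congruence obstruction modulo $3$; I expect the entire difficulty to be concentrated at the prime $3$, so no appeal to larger moduli will be needed.

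First I would normalize the solution. Suppose $(x,y,u)$ is a nontrivial integer solution of $39x^2+48xy+39y^2=u^2$. If $d=\gcd(x,y)>1$, then $d^2$ divides the left-hand side, hence $d\mid u$, and $(x/d,y/d,u/d)$ is again a nontrivial solution; so I may assume $\gcd(x,y)=1$. (Note also that $x=0$ would force $39y^2=u^2$, impossible for $y\neq0$ since $39$ is not a perfect square, and symmetrically for $y=0$.) Next I would extract the common factor $3$: since $39x^2+48xy+39y^2=3(13x^2+16xy+13y^2)$, the left-hand side is divisible by $3$, hence $3\mid u$; writing $u=3u_1$ and cancelling gives $13x^2+16xy+13y^2=3u_1^2$. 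Reducing modulo $3$ yields $x^2+xy+y^2\equiv0\pmod3$, and because $x^2+xy+y^2\equiv(x-y)^2\pmod3$ this forces $x\equiv y\pmod3$. Together with $\gcd(x,y)=1$, this gives $3\nmid x$ and $3\nmid y$.

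The decisive step is an algebraic rewriting that isolates the factor $x-y$. Using the identity $13x^2+16xy+13y^2=13(x-y)^2+42xy$ and substituting $x-y=3t$ (legitimate since $3\mid x-y$), the relation $13x^2+16xy+13y^2=3u_1^2$ becomes $117t^2+42xy=3u_1^2$, that is, $39t^2+14xy=u_1^2$. Reducing this modulo $3$ gives $u_1^2\equiv14xy\equiv2xy\pmod3$; but $y\equiv x\pmod3$ with $3\nmid x$ yields $xy\equiv x^2\equiv1\pmod3$, so $u_1^2\equiv2\pmod3$, which is impossible since squares are congruent to $0$ or $1$ modulo $3$. This contradiction shows that no nontrivial solution can exist.

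The main obstacle is locating the right manipulation rather than performing any hard computation: a naive reduction of the original equation modulo $3$ or modulo $9$ only reproduces $x\equiv y\pmod3$ and then stalls. The essential idea is to first cancel the factor of $3$ and only afterward complete the square in the form $13(x-y)^2+42xy$, so that the substitution $x-y=3t$ collects all of the $3$-divisible contribution into a single term and lays bare the residue $2xy\equiv2$, which a perfect square can never match modulo $3$.
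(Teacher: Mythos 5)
Your proof is correct, and it takes a genuinely different route from the paper's: you rule out solutions by an obstruction at the prime $3$, whereas the paper works at the prime $2$. Your argument is the classical ``primitive solution plus local obstruction'' template: reduce to $\gcd(x,y)=1$, peel off the content via $39x^2+48xy+39y^2=3(13x^2+16xy+13y^2)$ to get $3\mid u$ and then $13x^2+16xy+13y^2=3u_1^2$, deduce $x\equiv y\pmod 3$ with $3\nmid xy$, and finally use the identity $13x^2+16xy+13y^2=13(x-y)^2+42xy$ with $x-y=3t$ to reach $39t^2+14xy=u_1^2$, whose reduction gives $u_1^2\equiv 2\pmod 3$, impossible; every step checks out. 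The paper instead writes $x=2^av$, $y=2^bw$ with $v,w$ odd and $a\le b$, factors out $2^{2a}$, and observes that the cofactor $39v^2+2^{b-a}\,48\,vw+2^{2(b-a)}\,39\,w^2$ is congruent to $2\pmod 4$ when $a=b$ and to $3\pmod 4$ when $a<b$, so it can never be a square. The paper's dyadic scaling needs no coprimality normalization and no algebraic identity, so it is shorter and entirely self-contained; your version requires the extra normalization and the key rewriting $13(x-y)^2+42xy$, but it is the more systematic descent argument, it pinpoints the obstruction as living at the prime dividing the content of the form (in effect proving $3$-adic rather than $2$-adic insolubility), and the same two-step pattern (extract the content, force $x\equiv y$, substitute, reduce again) transfers readily to other forms whose content is a single prime.
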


\begin{proof}
Let $x = 2^a v$ and $y = 2^b w$, with $a, b$ nonnegative integers  
and $v, w$ odd. 
We may assume that $a \le b$; otherwise, we interchange $x$ and $y$. 
Then
$$ 39 x^2 + 48 x y  + 39 y^2 = 
2^{2a}(39 v^2 + 2^{b-a} 48 \, vw + 2^{2(b-a)} 39 \, w^2) , $$
and this will be a square only if  and only if  
the expression in parentheses is a square. 
But if $b = a$ this expression is congruent to $2$ mod 4, 
while if $b > a$ this expression is congruent to $3$ mod 4,
so in either case this is impossible. 
\end{proof}

\subsection{Example 2: a dense set of integer solution lines.} 

Consider the symmetric matrix
\begin{equation}
      A = \begin{pmatrix}  1&2&2\cr
                                      2&1&2\cr
                                      2&2&1 \end{pmatrix} .
             \label{firstA}
\end{equation} 
Here
$$ B = A^t A = A^2 = 
\begin{pmatrix}              9&8&8\cr
                                      8&9&8\cr
                                      8&8&9 \end{pmatrix}
$$
so that equation (\ref{thecone}) will be
\begin{equation*}
8x^2 + 16 x y + 16 x z + 8 y^2 + 16 y z + 8 z^2 = 0 , 
\end{equation*}
or (after dividing by $8$)
\begin{equation}
(x+y+z)^2  = 0 . 
\label{aPlane}
\end{equation}
In our case, the cone degenerates into the plane $x+y+z = 0$.
We can pick an integer basis, say
$\ve v_1 = \langle 1, 0, -1 \rangle$
and
$\ve v_2 = \langle 0, 1, -1 \rangle$,
and obtain every integer norm-preserving line as generated by
$\alpha \ve v_1 + \beta \ve v_2$, with integer coefficients 
$\alpha, \beta$ such that $\alpha^2 + \beta^2 > 0$.  
This constitutes a dense set of integer solution lines, among all possible
solutions in the plane $x+y+z=0$.

The ellipsoid $\normm{A \ve v}^2 = 1$ lies inside the unit sphere, 
and is tangent to it along the intersection of the sphere with the solution plane; 
Figure \ref{EllipseAndPlane} depicts the situation. 

\begin{figure}[htbp]
\begin{center}
\includegraphics[height=50mm]{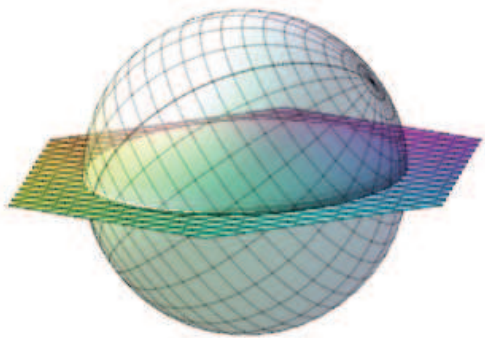}
\caption{Solution cone is degenerate.}
\label{EllipseAndPlane}
\end{center}
\end{figure}

\subsection{Example 3: infinitely many integer solution lines.} 

Finally, let us consider an example when there are still infinitely many integer 
solution lines, yet we cannot guarantee they are dense in the cone of all 
solution lines. 
Consider the matrix

\begin{equation}
      A = \begin{pmatrix}  1&2&3\cr
                                      2&1&1\cr
                                      1&1&1 \end{pmatrix} .
             \label{firstA}
\end{equation} 
One eigenvalue of $A$ is $-1$, with eigenvector $\ve = \langle -1, 1, 0 \rangle$, 
which therefore provides one integer solution line.
Are there any other such lines? The matrix $B = A^t A$ is
$$ B =  
\begin{pmatrix}              6&5&6\cr
                                      5&6&8\cr
                                      6&8&11 \end{pmatrix}
$$
and consequently  
equation (\ref{thecone}) becomes
\begin{equation*}
5 x^2 + 10 x y + 12 x z + 5 y^2 + 16 y z + 10 z^2 = 0 .
\end{equation*}
Solving for $x$ (which provides a slightly shorter answer than solving for $z$) yields
$$ x = - y - {6 \over 5} z \pm {\sqrt{-20 yz - 14 z^2} \over 5} . $$
We conclude that there will be integer solutions lines if  and only if  the 
discriminant $-20 yz - 14 z^2$ is a perfect square.%
             \footnote{Notice, by the way, that by setting $z=0$ we get back the
             eigenline determined by $\langle -1, 1, 0 \rangle$.}
This leads to the Diophantine equation
\begin{equation}
-20 yz - 14 z^2 = u^2 . 
\label{diophantine3}
\end{equation}

Let us find all integer solutions of (\ref{diophantine3}). 
As remarked previously, if $z=0$ we get back the eigenline 
corresponding to $\lambda = -1$, 
generated by $\langle -1, 1, 0\rangle$, 
so let us assume $z \neq 0$. 

If $z$ is odd, then 
$$ -20yz - 14z^2 \equiv -14 \equiv 2 \pmod 4 , $$
and hence cannot be a square. 
Therefore  $z$ must be even, so 
$z = 2a$ for some integer $a$. 
Substituting into (\ref{diophantine3}) we get
$$ 8 ( - 5 y a - 7 a^2) = u^2 , $$
whence $u$ is divisible by $4$. 
Letting $u = 4b$ and dividing by $8$, we obtain
$$ a(-5y - 7a) = 2b^2 . $$
Letting 
$a = v^2 q$, with $q$ square-free, we obtain
\begin{equation}
\label{intermediate}
v^2 q ( -5y - 7v^2 q )  = 2 b^2 . 
\end{equation}
Denoting for a moment $p = -5y - 7 v^2 q$, the above equation reads
\begin{equation}
\label{intermediate2}
v^2 pq = 2 b^2 , 
\end{equation}
where $q$ is square-free.
This implies that $pq$ is even.

If $p$ is even, so $p = 2t$ for some integer $t$, then (\ref{intermediate2}) becomes
$v^2 tq = b^2$, and it follows that $t = qr^2$ for some integer $r$, so that 
\begin{equation}
\label{pIs}
p = 2q r^2.
\end{equation}
On the other hand, if $p$ is odd, then we must have 
$q = 2 s$, with $s$ odd; hence (\ref{intermediate2}) is now
$v^2 ps = b^2$, whence 
$p = s r^2$ for some integer $r$, and therefore
$$ p = s r^2 = {q \over 2} r^2 = 2 q \left({r \over 2}\right)^2 , $$
and we get back expression (\ref{pIs}) for $p$, 
if we allow $r$ to be an integer or a half-integer. 

We conclude that $-5y - 7 v^2 q$ must be equal to $2qr^2$, for some integer 
or half-integer $r$. 
Solving for $y$, we obtain 
\begin{equation}
\label{yValue}
y = - {q \over 5} (7v^2 + 2r^2) , 
\end{equation}
where $v, q$, and $2r$ are arbitrary integers, subject only to the condition that 
$y$ be an integer. 
Moreover, since rational coordinate vectors also determine integer solution lines, 
we can even drop this additional condition.

To find the value of the other variables in terms of $(v, q, r)$, 
we observe first that 
\begin{equation}
\label{zValue}
z = 2a = 2v^2 q, 
\end{equation}
and that, 
from (\ref{intermediate}), we get 
$ b = vqr , $
whence
\begin{equation}
\label{uValue}
u = 4vqr . 
\end{equation}
(We have chosen only the ``$+$'' sign for $b$, since the ``$\pm$'' is recovered 
when computing the $x$-value; see (\ref{xValue}).)
Next, since 
\begin{equation}
\label{xValue}
x = -y - {6 \over 5} z  \pm {1 \over 5} u , 
\end{equation}
we get
\begin{equation}
x = \displaystyle {q \over \strut 5}(2 r^2 -5 v^2 \pm 4vr) .
\end{equation} 
Equations (\ref{xValue}), (\ref{yValue}), and (\ref{zValue}) provide 
the coordinates of all possible integer solution lines. 
Observing, however, that $q$ is a common factor of all three coordinates, 
and proportional vectors provide the same solutions,
we may just set $q=1$, 
replace $r$ by $r/2$,  and 
conclude
that all solution lines 
of (\ref{diophantine3}) are given by
$$
\begin{cases} x = \displaystyle {1 \over \strut 10}(r^2 - 10 v^2 \pm 4vr) \cr 
                      y =- \displaystyle {1 \over \strut 10} (14 v^2 + r^2) \cr
                      z = 2 v^2 , \cr
\end{cases} $$
where $v$ and $r$ are arbitrary integers. 

For example, choosing $(v, r) = (1, 4)$ we get the vectors
$$ \left\langle {11 \over 5}, -3, 2 \right\rangle  \sim \langle 11, -15, 10 \rangle 
             \qquad \hbox{ and } \qquad \langle 1, 3, -2 \rangle ; $$
and choosing  $(v, r) = (1, 1)$ we obtain
$$ \left\langle  -{1\over2},  -{3\over2}, 2  \right\rangle \sim \langle 1, 3, -4 \rangle 
\qquad \hbox{ and } \qquad \left\langle  - {13 \over 10}, - {3 \over 2}, 2 \right\rangle 
  \sim \langle 13, 15, -20 \rangle . $$

\subsection{A general method.} 
There is a method for obtaining infinitely many integer solution lines, which 
can be applied to a general $3 \times 3$ matrix
with integer or rational coefficients; 
the drawback is that one must know (at least) one nontrivial solution. 
This is an idea by  T.~Piezas \cite{wolfram}. 
Namely, if we know one particular integer solution 
$(y, z, u) = (m, n,  p)$
of the Diophantine equation
$$ a  y^2 + b  y  z + c  z^2= d  u^2, $$
then a two-parameter family of solutions is given by
 \begin{eqnarray*}
 y &=& (a m+b n) s^2 + 2 c n s t- c m t^2, \\
z &=&a n s^2+2 a m s t+(b m+c n) t^2, \\
u &=& p (a s^2+b s t+c t^2), 
\end{eqnarray*}
where $s$ and $t$ are arbitrary integers. 

For example, for the matrix
$$  A = \begin{pmatrix} 1&2&3\cr3&4&5\cr2&3&4  \end{pmatrix} , $$
equation (\ref{thecone}) for $\ve v = \langle x, y, z \rangle$ is
$$ 13 x^2 + 40 xy + 52 xz + 28 y^2 + 76 yz + 49 z^2 = 0 , $$
which solved with respect to $x$ produces
\begin{equation}
\label{ValueOfX}
x = {- 20 y \over 13} - 2z \pm {\sqrt{36y^2 + 52 yz + 39 z^2} \over 13} ,
\end{equation}
so to get integer solutions we need to solve the Diophantine equation
\begin{equation}
36y^2 + 52 yz + 39 z^2 = u^2.
\end{equation}
Setting $z=0$, it is not hard to guess the particular solution 
$(y, z, u) = (m, n, p) = (1, 0, 6)$. 
The corresponding two-parameter solution family looks like
\begin{eqnarray}
y &=& 36s^2 - 39 t^2\cr
z &=& 72 st + 52 t^2\cr
u &=& 216 s^2 + 312 st + 234 t^2.
\end{eqnarray}
Each choice of $(s, t)$ produces two integer solution lines $\ve v = \langle x,y,z \rangle$,
using the two $x$-values provided by (\ref{ValueOfX}).
For example, for $(s, t) = (1,1)$ we get
$$ \left\langle - {2402 \over 13}, -3, 124 \right\rangle \sim \langle -2402, -39, 1612\rangle, 
\qquad\hbox{ and } \qquad \langle -302, -3, 124 \rangle ; $$
and $(s, t) = (1, 2)$ yields
$$ \left\langle - {4976 \over 13}, -120, 352 \right\rangle \sim \langle -4976, -1560, 4576\rangle, 
\qquad\hbox{ and } \qquad \langle -656, -120, 352 \rangle . $$

\section{Questions for further study, applications.} 

\begin{itemize}

\item 
Unlike the case of eigenpairs, the solution lines to equation (\ref{norma}) 
are very much dependent on the chosen norm in $\r^n$. It would be interesting 
to discuss similar solutions for other norms in Euclidean space. 

\item
We have only grazed the case of a $3 \times 3$ matrix $A$. Can we find solvability conditions 
of (\ref{norma}) in terms of the coefficients of $A$, as we did in the $2 \times 2$ case? 
Can one find nontrivial families of integer matrices $A$ for 
which (\ref{norma}) has integer solutions? 

\item 
{\bf Application to toral automorphisms.} 
Many of the $2 \times 2$ integer matrices we studied, for example, 
the subfamily  
\begin{equation}
\label{autom}
A = \begin{pmatrix} q+1 & q \cr q & q-1 \end{pmatrix} , 
\end{equation}
with $q$ integer, 
are symmetric and have determinant $-1$; 
therefore, they can be regarded also as linear automorphisms of the (flat) 2-torus, 
which possess very interesting dynamical properties;
see, for example \cite{katok}, p.~42. 
Nontrivial such automorphisms have eigenvectors with irrational slopes; 
on the other hand, the integer solution lines of (\ref{autom}) bisect the eigendirections. 
We can therefore use integer arithmetic to compute iterates of vectors in the stable
and in the unstable manifolds of such automorphisms. 
For example, the matrix
\begin{equation}
\label{automPart}
A = \begin{pmatrix} 3 & 2 \cr 2 & 1 \end{pmatrix} 
\end{equation}
has integer solution lines generated by 
$\ve v_1 = \langle 1, -1 \rangle$ and $\ve v_2 =\langle -1, 3 \rangle$, 
and irrational eigenvalues
$$ \lambda_1 = 2+\sqrt 5 \qquad\hbox{ and } \qquad \lambda_2 =  2 - \sqrt 5 . $$
If we choose the equal-norm vectors
$$ \ve v_1 =  \langle 1, -1 \rangle \qquad\hbox{ and } \qquad
    \ve v_3 = \frac{1}{\sqrt 5}  \ve v_2 = \frac{1}{\sqrt 5} \langle -1, 3 \rangle , $$
then $\ve u = \ve v_1 + \ve v_3$ will be along the unstable direction of $A$, and 
$\ve w = \ve v_1 - \ve v_3$ will be along the stable direction. 
Therefore, on the one hand
$$ A^n \ve u = \lambda_1^n \ve u = (1 + \sqrt 5)^n \ve u, $$
and on the other hand
$$ A^n \ve u = A^n \ve v_1 + \frac{1}{\sqrt{5}} A^n \ve v_2 . $$
For example, 
\begin{align*}
&A^{10} \ve u = \begin{pmatrix}
1346269 & 832040 \cr 832040 & 514229
\end{pmatrix}  \ve u =   \cr
&= \left\langle 514229 + \frac{1}{\sqrt{5}} 1149851,  \,
317811 + \frac{710647}{\sqrt{5}}  \right\rangle ,
\end{align*}
provides a way to compute $(2 + \sqrt 5)^{10} \ve u$ 
using only integer arithmetic. 
A similar calculation can be used for iterates of vectors in the stable direction. 

\end{itemize}

\section{Acknowledgments.} 
%
I am grateful to Dr.~Robert Lopez for introducing the interesting idea of
vectors whose norms are preserved by a linear map. 
I also wish to thank the Editorial Board member of the {\it American Mathematical Monthly},
for valuable comments, corrections, and suggestions 
that were used to significantly improve the overall quality of the paper. 
This applies, especially, to a shorter and more conceptual proof of Theorem \ref{theo2}, 
and to a complete solution of the Diophantine equation (\ref{diophantine3}).

%
%
%
%
\vfill\eject

\end{document}